\documentclass[10pt]{article}

\usepackage{amssymb,amsthm,amsmath}
\usepackage{enumerate}
\usepackage{graphicx,color}
\usepackage{setspace}
\usepackage[hidelinks]{hyperref}

\newcommand{\dd}{\mathrm{d}}
\newcommand{\E}{\mathbb{E}}

\newcommand{\R}{\mathbb{R}}
\newcommand{\C}{\mathbb{C}}
\newcommand{\p}[1]{\mathbb{P}\left( #1 \right)}
\newcommand{\scal}[2]{\left\langle #1, #2 \right\rangle}

\DeclareMathOperator{\vol}{vol}

\newtheorem{theorem}{Theorem}
\newtheorem{lemma}[theorem]{Lemma}

\theoremstyle{remark}

\theoremstyle{definition}

\title{\vspace{-3em}Szarek and Ball made simple}
\author{Colin Tang$^\dagger$ and Tomasz Tkocz$^\dagger$\footnote{Email: ttkocz@math.cmu.edu. 
Research supported in part by 
Simons Foundation International Travel Support for Mathematicians grant SFI-MPS-TSM-00025936.}
}

\date{\begin{normalsize}
\emph{$^\dagger$Department of Mathematical Sciences, Carnegie Mellon University, Pittsburgh, PA 15213, USA} 
\end{normalsize}}

\begin{document}

\maketitle

\begin{abstract}
We present proofs of Szarek's moment comparison and Ball's cube slicing inequalities based on elementary convexity and majorisation. The key ingredient is a moment-lifting argument which allows us to use Schur monotonicity, avoiding the integral inequalities used in the original Fourier-analytic approaches.
\end{abstract}

\bigskip

\begin{footnotesize}
\noindent {\em 2020 Mathematics Subject Classification.} Primary 52A40; Secondary 60E15, 26D15.

\noindent {\em Key words.} cube, volume of sections, Schur-convexity, uniform distribution, Rademacher sums, Khinchin-type inequalities
\end{footnotesize}

\bigskip

\section{Introduction}

Szarek's $L_1-L_2$ moment comparison inequality \cite{Sz} for Rademacher sums $X$ states that
\begin{equation}\label{eq:Szarek}
\E|X| \geq \frac{1}{\sqrt{2}}
\end{equation}
for \emph{all} random variables $X$ of the form $X = \sum_{j=1}^n a_j\varepsilon_j$, where $\varepsilon_1, \varepsilon_2, \dots$ are independent and identically distributed (i.i.d.) Rademacher random variables, that is, random signs, $\p{\varepsilon_j = \pm 1} = \frac12$, and $a_1, \dots, a_n$ are real numbers satisfying
\begin{equation}\label{eq:L2-const}
\E|X|^2 = \sum_{j=1}^{n} a_j^2 = 1.
 \end{equation}
Moreover, when $n \geq 2$ the inequality is sharp and equality holds if and only if the coefficient vector $a = (a_1, \dots, a_n)$ is $a = (\frac{1}{\sqrt2}, \frac{1}{\sqrt2},0,\dots, 0)$, up to permutations and sign changes.

Ball's cube slicing inequality \cite{Ball-cube} is a geometric result asserting that the maximal-volume central hyperplane sections $[-\frac12, \frac12]^n \cap a^\perp$ of the unit-volume cube $[-\frac12, \frac12]^n$ in $\R^n$ occur precisely at the two-coordinate diagonal sections by hyperplanes $a^\perp$ with unit normal vectors $a = (a_1, \dots, a_n)$ proportional to $(1, 1, 0, \dots, 0)$, again, up to symmetries: permutations and coordinate-hyperplane reflections.

Both inequalities admit arguably natural geometric and probabilistic formulations. Szarek's inequality is equivalent to the statement, \emph{dual} to cube slicing, about minimal-volume hyperplane projections of the cross-polytope $B_1^n = \{x \in \R^n, \ \sum_{j=1}^n |x_j| \leq 1\}$ (see \cite{Ball-wave, NT-surv}), with the same extremising hyperplanes, whereas Ball's inequality is equivalent to a probabilistic statement about sums of independent uniform random variables (see \cite{Ball-wave}). Thanks to Archimedes' hat-box theorem (see \cite{CKT, NT-surv}), this can be further recast in perfect analogy to \eqref{eq:Szarek}, namely Ball's cube slicing inequality states that
\begin{equation}\label{eq:Ball}
\E\left[|X|^{-1}\right] \leq \sqrt{2}
\end{equation}
for \emph{all} random vectors $X$ of the form $X = \sum_{j=1}^n a_j\xi_j$, where $\xi_1, \xi_2, \dots$ are i.i.d. uniform on the unit Euclidean sphere $S^2$ in $\R^3$ and $a_1, \dots, a_n$ are real numbers satisfying the same constraint \eqref{eq:L2-const}. We shall refer to such a random vector $X$ as a \emph{spherical sum}, to parallel the term ``Rademacher sum''.
Here and throughout, $|\cdot|$ also denotes the standard Euclidean norm.

Each of these results has been quite influential in its own and intertwined right, with a great deal of understanding of their various extensions and refinements (see, e.g. \cite{NT-surv}). Two particularly effective approaches use Fourier integral estimates (Haagerup's approach from \cite{Haa} to \eqref{eq:Szarek} and Ball's original proof of \eqref{eq:Ball} in \cite{Ball-cube}). Additionally, in the Rademacher case, there is a one of its kind proof from the book, a spectral-gap argument of Lata\l a and Oleszkiewicz \cite{LO} using the discrete Fourier analytic framework (see, e.g. \cite{NT-surv}, p. 381, or Exercise~2.55 in \cite{ODonell-book} for a  modern presentation), taking \eqref{eq:Szarek} beyond the Euclidean setting to every normed space. We give a brief historical account in the final section.

Our purpose here is to show a new non-technical convexity-based argument explaining both inequalities in a unified fashion, dispensing with the Fourier-methods. 
For brevity, we focus on the inequalities and the sharpness of their constants, and do not pursue the characterisation of all equality cases in \eqref{eq:Szarek} and \eqref{eq:Ball}. This characterisation will follow from stability estimates in a companion paper \cite{TT}.

In the next section, we  recall the classical notion of majorisation and the well-known Schur-monotonicity results available for $L_p$ moments of sums $X$ featured in \eqref{eq:Szarek} and \eqref{eq:Ball}. They provide the very underpinning of a new argument making Szarek's and Ball's inequalities \emph{simple}. The argument is then presented in detail in the next two sections. The final section is devoted to a wider overview and a natural programme for further work.

\section{Majorisation background}

For two nonnegative vectors $x = (x_1, \dots, x_n)$ and $y = (y_1, \dots, y_n)$ in $\R_+^n$ with equal total mass $\sum_{j=1}^{n} x_j = \sum_{j=1}^n y_j$, we say that $x$ is \emph{majorised} by $y$, denoted $x \prec y$, if all partial sums of a nonincreasing rearrangement $x^* = (x_j^*)_{j=1}^n$ of $x$ (i.e. a permutation with $x_1^* \geq \dots \geq x_n^*$) do not exceed the corresponding partial sums of $y^*$,
\begin{equation}\label{eq:maj-def}
\sum_{j=1}^k x_j^* \leq \sum_{j=1}^k y_j^*, \qquad \text{for all} \quad k = 1, \dots, n.
 \end{equation}
There are several other equivalent definitions. One is that $x = Py$, for an $n \times n$ doubly stochastic matrix $P$. Alternatively, $x$ is obtained from $y$ by a finite sequence of $T$-transforms, where each is a convex combination of the identity and a transposition, namely of the form 
\begin{align*}
x_i &= (1-\lambda)y_i + \lambda y_j = y_i + \lambda(y_j-y_i), \\
x_j &= \lambda y_i + (1-\lambda)y_j = y_j - \lambda(y_j-y_i),
 \end{align*}
for some indices $i \neq j$ and $0 \leq \lambda \leq 1$, with the remaining coordinates unchanged
(robbing Peter to pay Paul).
Finally, geometrically, $x \prec y$ if $x$ lies in the convex hull of all permutations of $y$, $x = \sum_{\sigma} \lambda_\sigma y_\sigma$ for some nonnegative weights $\lambda_\sigma$ adding up to $1$, indexed by all permutations $\sigma$ of the $n$-element index set, and $y_\sigma = (y_{\sigma(1)}, \dots, y_{\sigma(n)})$. 

For example, among all nonnegative vectors $x$ of total mass $1$, the extremal ones are the constant vector and the standard basis vectors, that is we have
\begin{equation}\label{eq:maj-example}
\left(\frac1n, \dots, \frac1n\right) \ \prec \ x \ \prec \ (1, 0, \dots, 0).
\end{equation}

A function $f\colon \R_+^n \to \R$ is Schur-convex (concave) if it preserves (reverses) the majorisation order, that is $f(x) \leq f(y)$ ($f(x) \geq f(y)$) provided that $x \prec y$. We refer to \cite{Bhatia, MOA} for further background on this classical elementary notion.

As an instructive example, vital in the sequel, fix $x \in \R$ and consider the function
\[ 
h(t) = \E|x + \sqrt{t}\varepsilon|^3, \qquad t \geq 0,
 \]
where $\varepsilon$ is a Rademacher random variable.
Without loss of generality, we can assume that $x \geq 0$ (thanks to the symmetry of $\varepsilon$). Then, explicitly, 
\[ 
h(t) = \frac{|x+ \sqrt{t}|^3 + |x - \sqrt{t}|^3}{2} = \begin{cases}
x^3 + 3xt, & 0 \leq t \leq x^2, \\
t^{3/2} + 3x^2\sqrt{t}, & t \geq x^2.
\end{cases}
 \]
Suppose $x > 0$. The function $h$ is affine on $[0, x^2]$, the one-sided derivatives at $t = x^2$ match and $h''(t) = \frac{3(t-x^2)}{4t^{3/2}}$, $t > x^2$, so $h$ is strictly convex on $(x^2, \infty)$ and convex globally. At $x=0$, $h(t) = t^{3/2}$, which is plainly convex. Consequently, we have the following claim,
\begin{equation}\label{eq:L3-Schur}
F(x_1, \dots, x_n) = \E\Big[|\sqrt{x_1}\varepsilon_1 + \dots + \sqrt{x_n}\varepsilon_n|^3\Big] \qquad \text{is Schur-concave,}
\end{equation}
which is classical, going back to Eaton's criterion \cite{Eat}, verified for power functions $|\cdot|^p$, $p \geq 3$ by Komorowski in \cite{Kom}. 

For completeness, we present a direct argument. To see \eqref{eq:L3-Schur}, it suffices to verify this under a single $T$-transform. Thanks to symmetry under permutations, this in turn is reduced to verifying that the map
\[ 
f(\delta) = F(a+\delta, a-\delta, x_3, \dots, x_n)     
 \]
is nonincreasing on $[0, a]$ for a fixed $a > 0$ and $x_3, \dots, x_n \geq 0$. Conditioning on $(\varepsilon_j)_{j \geq 3}$ and letting $x = \sum_{j \geq 3} \sqrt{x_j}\varepsilon_j$, we have
\begin{align*}
f(\delta) &= \E_x\E_{\varepsilon_1, \varepsilon_2}\left[\Big|\sqrt{a+\delta}\varepsilon_1 + \sqrt{a-\delta}\varepsilon_2 + x\Big|^3\right] \\
&= \E_x\E_{\varepsilon_1, \varepsilon_2} h_x\left(|\sqrt{a+\delta}\varepsilon_1 + \sqrt{a-\delta}\varepsilon_2|^2\right) \\
&= \E_x\E_{\varepsilon} h_x(2a + 2\sqrt{a^2-\delta^2}\varepsilon),
 \end{align*}
where we denote $h_x(t) = h(t)$ to emphasise the dependence on $x$.
The point is that since $t \mapsto h_x(t)$ is convex, the function
\[ 
t \mapsto \E_\varepsilon h_x(2a(1+t\varepsilon)), \qquad -1 \leq t \leq 1
 \]
is convex and even, hence nondecreasing on $[0,1]$, proving the claimed Schur-concavity \eqref{eq:L3-Schur}.

Combined with \eqref{eq:maj-example}, \eqref{eq:L3-Schur} gives that every Rademacher sum $X = \sum_{j=1}^n a_j\varepsilon_j$ of length at most $n$, subject to the $L_2$ constraint \eqref{eq:L2-const}, satisfies
\[ 
1 \leq \E|X|^3 \leq \E\left|\frac{\varepsilon_1 + \dots + \varepsilon_n}{\sqrt{n}}\right|^3
 \]
(nota bene, also by \eqref{eq:L3-Schur}, the right-hand side is nondecreasing in $n$).

\section{Szarek's inequality made simple}

As in the Fourier proofs \cite{Ball-cube, Haa}, we distinguish the \emph{easy} case, when one coefficient carries at least half of the total squared mass, from the opposite \emph{hard} case, when every coefficient carries less than half.

\subsection[Easy case]{\emph{Easy} case: $\max_j |a_j| \geq \frac{1}{\sqrt2}$.}

For the proof of \eqref{eq:Szarek}, first note the elementary identity
\begin{equation}\label{eq:L_1-n=2}
\E|a\varepsilon_1 + b\varepsilon_2| = \max\{|a|,|b|\}, \qquad a, b \in \R.
\end{equation}
In particular, for the Rademacher sum $X = \sum_{j=1}^{n} a_j\varepsilon_j$, we have
\[ 
\E|X| \geq \max_j |a_j|.
 \]
Consequently, \eqref{eq:Szarek} holds whenever 
\[
\max_j |a_j| \geq \frac{1}{\sqrt2}.
 \]

\subsection[Hard case]{\emph{Hard} case: $\max_j |a_j| \leq \frac{1}{\sqrt2}$.}

Setting $x_j = a_j^2$, the opposite case is equivalent to
\begin{equation}\label{eq:Szarek-hard-case}
\E\left|\sum_{j=1}^n \sqrt{x_j}\varepsilon_j \right| \geq \frac{1}{\sqrt{2}} = \E\left|\frac{1}{\sqrt2}\varepsilon_1+\frac{1}{\sqrt2}\varepsilon_2\right|,
 \end{equation}
for all $x = (x_1, \dots, x_n) \in \R_+^n$ with $\sum x_j = 1$ and $x_j \leq \frac12$, for every $j$. For such vectors, 
\[ 
(x_1, \dots, x_n) \prec \left(\frac12, \frac12, 0, \dots, 0\right),
 \]
but the map $(x_1, \dots, x_n) \mapsto \E|\sum \sqrt{x_j}\varepsilon_j|$ fails to be Schur-monotone (otherwise, Szarek's inequality would plainly have a different extremiser). However, we have \eqref{eq:L3-Schur}, so it is a compelling idea to connect the $L_1$ and $L_3$ moments of $X$.  

\subsection[The L1-L3 bridge]{The $L_1-L_3$ bridge.}

A natural link between power functions with different exponents is offered by differentiation. If a function $f\colon \R \to \R$ is $C^2$, then for a fixed $x \in \R$, as $h \to 0$,
\[ 
\E f(x+ h \varepsilon) = \frac{f(x+h) + f(x-h)}{2} = f(x) + \frac{h^2}{2}f''(x) + o(h^2),
 \]
thus
\begin{equation}\label{eq:Rad-diff}
\frac{\dd }{\dd t}\Big|_{t=0+} \E f(x+\sqrt{t}\varepsilon) = \frac{1}{2}f''(x).
 \end{equation}
We emphasise that the square-root order of the increment is the right scaling for the Rademacher averaging.
In particular,
\begin{equation}\label{eq:Rad-diff-L3}
\frac{\dd }{\dd t}\Big|_{t=0+} \E |x+\sqrt{t}\varepsilon|^3 = 3|x|.
\end{equation}
We are ready to establish the heart of the whole argument, a sharp $L_1-L_3$ moment comparison which offers the needed bridge.

\begin{lemma}\label{lm:Rad-bridge}
For every Rademacher sum $X = \sum_{j=1}^{n} a_j\varepsilon_j$, $a_j \in \R$, we have
\begin{equation}\label{eq:Rad-bridge}
\E|X| \cdot \E|X|^2 \geq \frac{1}{2}\E|X|^3,
\end{equation}
which is sharp.
\end{lemma}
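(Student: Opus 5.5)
The plan is to \emph{lift} the $L_1$ moment to an $L_3$ moment by adding one small new Rademacher term, and then use the Schur-concavity \eqref{eq:L3-Schur} to control the sign of a derivative. By homogeneity (both sides of \eqref{eq:Rad-bridge} scale like $\lambda^3$), I assume $\E|X|^2=\sum a_j^2=1$ and set $x_j=a_j^2$. The inequality to prove is then $\E|X|\ge \frac12\E|X|^3$. We may drop zero coefficients, so $x_j>0$ for all $j$.

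\emph{Step 1: the interpolating function.} For $0\le t<1$ define
\[
\psi(t)=\E\Big|\sqrt{1-t}\,X+\sqrt{t}\,\varepsilon_{n+1}\Big|^3=F\big((1-t)x_1,\dots,(1-t)x_n,t\big),
\]
with $F$ as in \eqref{eq:L3-Schur} on $\R_+^{n+1}$. Then $\psi(0)=F(x_1,\dots,x_n,0)=\E|X|^3$. Pulling out the factor $1-t$ gives $\psi(t)=(1-t)^{3/2}g\big(\tfrac{t}{1-t}\big)$, where $g(s)=\E|X+\sqrt{s}\,\varepsilon_{n+1}|^3$.

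\emph{Step 2: the derivative at $0$.} Condition on $X$, which takes finitely many values. By \eqref{eq:Rad-diff-L3} we get $g(0)=\E|X|^3$ and $g'(0+)=3\E|X|$. The explicit formula for $h$ shows the right derivative at $0$ exists for each value of $X$, including $X=0$, where $h(s)=s^{3/2}$ has right derivative $0=3|0|$. Hence
\[
\psi'(0+)=-\tfrac32\E|X|^3+3\E|X|.
\]
So \eqref{eq:Rad-bridge} is exactly the statement $\psi'(0+)\ge 0$.

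\emph{Step 3: monotonicity via majorisation.} This is the main point. It suffices to show $\psi(t)\ge\psi(0)$ for all small $t>0$, which follows from \eqref{eq:L3-Schur} once we know
\[
\big((1-t)x_1,\dots,(1-t)x_n,t\big)\prec(x_1,\dots,x_n,0).
\]
Both vectors have total mass $1$. Take $0<t\le(1-t)\min_j x_j$. Then the nonincreasing rearrangement of the left vector is $(1-t)x^*$ followed by $t$. Its first $k$ partial sums, for $k\le n$, are $(1-t)\sum_{j\le k}x_j^*\le\sum_{j\le k}x_j^*$, and the $(n+1)$-st partial sums agree. So the majorisation holds, Schur-concavity gives $\psi(t)\ge\psi(0)$, and letting $t\to0^+$ yields $\psi'(0+)\ge0$. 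The only delicate points are the differentiability in Step 2 and the ordering in the rearrangement, and both are handled above.

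\emph{Sharpness.} Take $X=\frac{1}{\sqrt2}(\varepsilon_1+\varepsilon_2)$. Then $\E|X|=\frac1{\sqrt2}$, $\E|X|^2=1$ and $\E|X|^3=\frac12\cdot 2^{3/2}=\sqrt2$, so both sides of \eqref{eq:Rad-bridge} equal $\frac1{\sqrt2}$.
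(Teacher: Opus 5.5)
Your proof is correct and is essentially the paper's argument. The paper uses $(x_1,\dots,x_n,t)\prec((1+t)x_1,\dots,(1+t)x_n,0)$ and compares difference quotients directly, whereas you renormalise to total mass $1$ via $((1-t)x,t)\prec(x,0)$ and differentiate $\psi$. This is the same lifting-plus-Schur-concavity mechanism, differing only by the homogeneity rescaling.
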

\begin{proof}
The assertion is trivial for $X = 0$. Otherwise, by homogeneity and after discarding zero coefficients and absorbing signs, we may write $X=\sum_{j=1}^n\sqrt{x_j}\varepsilon_j$, where $x_j>0$ and $\sum_jx_j=1$. To use \eqref{eq:Rad-diff-L3}, we introduce a new auxiliary summand $\sqrt{t}\varepsilon$ independent of $X$, with a small positive weight $t$ and \emph{spread} the weight out among the rest proportionally to the masses $x_j$. Specifically, note the following majorisation,
\[ 
(x_1, \dots, x_n, t) \prec \big((1+t)x_1, \dots, (1+t)x_n, 0\big),
 \]
which holds for all $0 < t < \min_j x_j$ (so that the appended component is the smallest one). Averaging \eqref{eq:Rad-diff-L3} over $X$ (which has finite support), and then using Schur-concavity \eqref{eq:L3-Schur}, we obtain
\begin{align*}
\E|X| &= \frac{1}{3}\lim_{t \to 0+} \frac{\E|X+\sqrt{t}\varepsilon|^3 - \E|X|^3}{t} \\
&\geq \frac{1}{3}\lim_{t \to 0+} \frac{\E|\sqrt{1+t}X|^3 - \E|X|^3}{t} \\
&= \frac{1}{2}\E|X|^3.
\end{align*}
We check directly that equality holds for $X = \frac{\varepsilon_1+\varepsilon_2}{\sqrt{2}}$. This finishes the proof.
\end{proof}

\subsection{End of proof.}

To finish the argument, we need to show \eqref{eq:Szarek-hard-case} for $x \in \R_+^n$ with $\sum x_j = 1$ and $x_j \leq \frac12$. Using first the $L_1-L_3$ bridge \eqref{eq:Rad-bridge} and then Schur-concavity \eqref{eq:L3-Schur}, we obtain
\[ 
\E|X| \geq \frac12 \E|X|^3 \geq \frac{1}{2}\E\left|\frac{\varepsilon_1+\varepsilon_2}{\sqrt{2}}\right|^3 = \frac{1}{\sqrt2},
 \]
since $x \prec (\frac12, \frac12, 0, \dots, 0)$.\hfill$\square$

This establishes a blueprint we shall now follow to present a similar proof of Ball's cube-slicing result \eqref{eq:Ball}, where instead of Rademacher sums, we need to handle spherical ones.

\section{Ball's inequality made simple}

Throughout this section, $\xi, \xi_1, \xi_2, \dots$ are i.i.d. random vectors uniform on the unit sphere $S^2$ in $\R^3$. For a fixed vector $x \in \R^3$, consider the function
\[ 
H(t) = \E|x + \sqrt{t}\xi|, \qquad t \geq 0.
 \]
Assume $x \neq 0$. Since the marginal distribution $\scal{x}{\xi}$ is uniform on $[-|x|, |x|]$, we can explicitly evaluate it,
\begin{align*}
H(t) &= \E\sqrt{|x+\sqrt{t}\xi|^2} = \E\sqrt{|x|^2 + 2\sqrt{t}\scal{x}{\xi} + t} \\
&= \frac{1}{2}\int_{-1}^1 \sqrt{|x|^2+2\sqrt{t}|x|u + t} \dd u \\
&= \begin{cases}
|x| + \frac{t}{3|x|}, & 0 \leq t \leq |x|^2, \\ \sqrt{t} + \frac{|x|^2}{3\sqrt{t}}, & t \geq |x|^2.
\end{cases}
 \end{align*}
Again, the function $H$ is affine on $[0, |x|^2]$, the one-sided derivatives at $t = |x|^2$ match and $H''(t) = -\frac{t-|x|^2}{4t^{5/2}}$, $t \geq |x|^2$, so $H$ is strictly concave on $(|x|^2, \infty)$ and concave globally. With hindsight, we also note the affine envelope bound,
\begin{equation}\label{eq:H-diff-bd}
|x| \leq H(t) \leq |x| + \frac{t}{3|x|}, \qquad t \geq 0,
\end{equation}
as well as the value of the right derivative
\begin{equation}\label{eq:H'(0)}
H'(0+) = \frac{1}{3|x|}.
\end{equation}
When $x = 0$, we simply have $H(t) = \sqrt{t}$.
Analogously to \eqref{eq:L3-Schur}, we derive the following claim,
\begin{equation}\label{eq:L1-Schur}
G(x_1, \dots, x_n) = \E\Big[|\sqrt{x_1}\xi_1 + \dots + \sqrt{x_n}\xi_n|\Big] \qquad \text{is Schur-convex.}
\end{equation}
(It is instrumental to use the fact that a sum of independent spherically symmetric random vectors is spherically symmetric; we leave out the details.)
This goes back to the work of K\"onig and Kwapie\'n on Khinchin-type inequalities \cite{KK}, which has been further delightfully treated in greater generality in all dimensions by Baernstein II and Culverhouse in \cite{BC}.

\subsection[The L-1-L1 bridge]{The $L_{-1}-L_1$ bridge.}

The next lemma is the analogue of Lemma \ref{lm:Rad-bridge}, providing the key link between the $L_{-1}$ and $L_1$ moments for spherical sums, with integrability aspects requiring a bit more care. In particular, $\E|X|^{-1} < \infty$ for every \emph{non}zero spherical sum $X$; this is carefully explained in the next subsection (see \eqref{eq:sph-n=2} below).

\begin{lemma}\label{lm:Ball-bridge}
For every sum $X = \sum_{j=1}^{n} a_j\xi_j$, $a_j \in \R$ (not all zero), we have
\begin{equation}\label{eq:Ball-bridge}
\E[|X|^{-1}]\cdot \E[|X|^2] \leq \frac{3}{2}\E|X|,
\end{equation}
which is sharp.
\end{lemma}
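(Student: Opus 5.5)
We mirror the proof of Lemma~\ref{lm:Rad-bridge}, with the roles of $L_3$ and $L_1$ now played by $L_1$ and $L_{-1}$, and with convexity replaced by concavity (so all inequalities flip). By homogeneity we may normalise and discard zero coefficients; absorbing signs is harmless since $-\xi_j$ is again uniform on $S^2$. So we write $X=\sum_{j=1}^n\sqrt{x_j}\xi_j$ with $x_j>0$ and $\sum_j x_j=1$, in which case $\E|X|^2=1$. The claim then reads $\E|X|^{-1}\le \frac32\E|X|$.

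The idea is again to append an independent summand $\sqrt{t}\xi$ with a small weight $0<t<\min_j x_j$. We use the same majorisation as before,
\[
(x_1,\dots,x_n,t)\prec\big((1+t)x_1,\dots,(1+t)x_n,0\big).
\]
By the Schur-convexity \eqref{eq:L1-Schur} this gives
\[
\E|X+\sqrt t\xi|\le \E|\sqrt{1+t}\,X|=\sqrt{1+t}\,\E|X|.
\]
Subtracting $\E|X|$ and dividing by $t$, the right-hand side yields $\frac{\sqrt{1+t}-1}{t}\E|X|\to\frac12\E|X|$.

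For the left-hand side we condition on $X$ and use $H_X(t)=\E_\xi|X+\sqrt t\xi|$. By \eqref{eq:H'(0)}, $(H_X(t)-|X|)/t\to \frac1{3|X|}$ pointwise on $\{X\neq0\}$, and the left-hand side should therefore tend to $\frac13\E|X|^{-1}$. This gives $\frac13\E|X|^{-1}\le\frac12\E|X|$, which is \eqref{eq:Ball-bridge}.

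The main obstacle is justifying this limit, since $X$ now has a continuous distribution and $|X|^{-1}$ is unbounded. We only need a lower bound on the left-hand side, so Fatou's lemma suffices. By the lower envelope in \eqref{eq:H-diff-bd}, the difference quotients $(H_X(t)-|X|)/t$ are nonnegative, and by \eqref{eq:H'(0)} they converge pointwise to $\frac{1}{3|X|}$. We must also know that $X\neq0$ almost surely, and that the case $X=0$ contributes nothing to the limit. The latter is automatic because the quotient there, $\sqrt t/t$, is nonnegative and Fatou only needs a lower bound. Fatou then gives
\[
\tfrac13\E|X|^{-1}\le\liminf_{t\to0+}\frac{\E|X+\sqrt t\xi|-\E|X|}{t}\le\tfrac12\E|X|.
\]
In particular this also shows $\E|X|^{-1}<\infty$ without needing the forthcoming integrability discussion.

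To see that $X\neq 0$ almost surely, note that for $n\ge2$ the vector $X$ has a density (one can condition on all but two summands), while for $n=1$ we have $|X|=1$. For $n=1$ the inequality is simply $1\le\frac32$. Finally, sharpness is checked at $X=\frac{\xi_1+\xi_2}{\sqrt2}$. Here $|X|^2=1+\langle\xi_1,\xi_2\rangle$ with $\langle\xi_1,\xi_2\rangle$ uniform on $[-1,1]$. Direct integration gives
\[
\E|X|^{-1}=\frac12\int_{-1}^1(1+u)^{-1/2}\,\dd u=\sqrt2
\quad\text{and}\quad
\E|X|=\frac12\int_{-1}^1(1+u)^{1/2}\,\dd u=\frac{2\sqrt2}{3},
\]
so equality $\sqrt2=\frac32\cdot\frac{2\sqrt2}3$ holds in \eqref{eq:Ball-bridge}.
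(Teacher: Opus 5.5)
Your proof is correct and follows the paper's route: the same lifting majorisation $(x_1,\dots,x_n,t)\prec((1+t)x_1,\dots,(1+t)x_n,0)$, Schur-convexity \eqref{eq:L1-Schur}, and the right derivative $H'(0+)=\frac{1}{3|x|}$ averaged over $X$, with the same sharpness check at $\frac{\xi_1+\xi_2}{\sqrt2}$. The only difference is technical. You pass to the limit with Fatou's lemma, using the lower envelope in \eqref{eq:H-diff-bd}, whereas the paper uses dominated convergence with the upper envelope. This spares you the a priori integrability $\E|X|^{-1}<\infty$ that the paper borrows from \eqref{eq:sph-n=2}, and in fact yields that integrability as a byproduct.
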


\begin{proof}[Proof of Lemma \ref{lm:Ball-bridge}]
First, we note that Taylor's formula gives the following analogue of \eqref{eq:Rad-diff},
\[ 
\frac{\dd }{\dd t}\Big|_{t=0+} \E f(x+\sqrt{t}\xi) = \frac{1}{6}\Delta f(x),
 \]
for a $C^2$ function $f\colon \R^3 \to \R$. We would like to apply it to the Euclidean norm function $f(x) = |x|$ since $\Delta |x| = 2|x|^{-1}$, $x \neq 0$, averaged over the law of $X$, but this requires care with regularity at the origin and differentiation under the expectation. To avoid these technicalities, we derive the desired identity directly for this function.

\bigskip\noindent
\textbf{Claim.} For every random vector $Y$ in $\R^3$ independent of $\xi$ with $\E|Y|^{-1} < \infty$ and $\E|Y| < \infty$, we have
\begin{equation}\label{eq:sph-diff}
\frac{\dd }{\dd t}\Big|_{t=0+} \E |Y+\sqrt{t}\xi| = \frac13\E|Y|^{-1}.
\end{equation}
Indeed, conditioning on the values of $Y$, and then averaging over $Y$ the function $H_x(t) = \E_\xi|x + \sqrt{t}\xi|$ analysed in \eqref{eq:H-diff-bd}-\eqref{eq:H'(0)}, we have
\begin{align*}
\lim_{t \to 0+}  \frac{\E |Y+\sqrt{t}\xi| - \E|Y|}{t} &= \lim_{t\to 0+} \E_Y\frac{H_Y(t) - |Y|}{t} \\
&= \E_Y\left[\lim_{t\to 0+} \frac{H_Y(t) - |Y|}{t}\right]= \E_Y H_Y'(0+) = \E_Y \frac{1}{3|Y|},
 \end{align*}
where we have used pointwise bound \eqref{eq:H-diff-bd} and Lebesgue's dominated convergence theorem to justify swapping the order of taking the limit and expectation, and then applied \eqref{eq:H'(0)}.

It remains to repeat the comparison from Lemma \ref{lm:Rad-bridge}, with the inequality reversed: invoke identity \eqref{eq:sph-diff} with $Y = X = \sum \sqrt{x_j}\xi_j$ (assuming without loss of generality $x_j >0$, $\sum x_j = 1$), spread out the new variable $t$ among the variables $x_j$, and finish with Schur-convexity  \eqref{eq:L1-Schur} of the $L_1$-norm to conclude $\frac13\E|X|^{-1} \leq \frac12\E|X|$. Again, it can be checked that equality holds for $X = \frac{\xi_1+\xi_2}{\sqrt2}$ (we have $\E|\xi_1+\xi_2| = \frac43$ and $\E|\xi_1+\xi_2|^{-1} = 1$).
\end{proof}

We now use the same dichotomy according to whether one coefficient
carries at least half of the total squared mass.

\subsection[Easy case]{\emph{Easy} case: $\max_j |a_j| \geq \frac{1}{\sqrt2}$.}

A spherical counterpart of \eqref{eq:L_1-n=2} in $\R^3$ reads as follows
\begin{equation}\label{eq:sph-n=2}
\E|a \xi_1 + b\xi_2|^{-1} = \min\{|a|^{-1}, |b|^{-1}\}, \qquad a, b \in \R 
\end{equation}
(with the convention $1/0 = +\infty$). It can be checked by a direct calculation akin to the one for function $H(t)$ (see, e.g. Lemma 6.6 in \cite{CNT}), or justified by means of the divergence theorem (see, e.g. Lemma 2 in \cite{GTW}). The direct calculation with $a, b > 0$ reads as follows
\[ 
\E|a\xi_1+b\xi_2|^{-1} = \frac12\int_{-1}^1 (a^2+b^2+2ab u)^{-1/2} \dd u = \frac{(a+b)-|a-b|}{2ab} = \frac{1}{\max\{a, b\}}.
 \]

Consequently, singling out a nonzero summand, say $a_1\xi_1$ in $X = \sum a_j\xi_j$, writing $X = a_1\xi_1 + X'$ and using the rotational invariance of the sum $X'$ of the remaining ones gives
\[ 
\E|X|^{-1} = \E\min\{|a_1|^{-1}, |X'|^{-1}\} \leq |a_1|^{-1}.
 \]
Incidentally, this explicitly establishes the integrability promised earlier.
(This is a probabilistic reincarnation of Ball's original geometric projection argument, see the beginning of the proof of Theorem 4 in \cite{Ball-cube}). Thus \eqref{eq:Ball} holds in the \emph{easy} case: $\max_j |a_j| \geq \frac{1}{\sqrt2}$.

\subsection[Hard case]{\emph{Hard} case: $\max_j |a_j| \leq \frac{1}{\sqrt2}$.}

Exactly as before, setting $x_j = a_j^2$, it remains to consider sums $X = \sum \sqrt{x_j}\xi_j$ with $\sum x_j = 1$ and $x_j \leq \frac12$ for every $j$. Starting with \eqref{eq:Ball-bridge}, and using the majorisation of such vectors $x$ by $(\frac12, \frac12, 0, \dots, 0)$ combined with \eqref{eq:L1-Schur} finishes the proof,
\[ 
\E|X|^{-1} \leq \frac{3}{2}\E|X| \leq \frac{3}{2}\E\left|\frac{\xi_1+\xi_2}{\sqrt2}\right| = \frac{3}{2\sqrt2}\frac43 = \sqrt2.
 \]

\section{Further remarks}

\subsection{Bridges without majorisation}

After discovering the unified proof mechanism for the inequalities of Szarek and Ball presented in the previous sections, we noticed an alternative way of obtaining the bridge inequalities \eqref{eq:Rad-bridge} and \eqref{eq:Ball-bridge}, the workhorses of the entire argument. The key insight is a replacement identity of Bara\'nski, Murawski, Nayar and Oleszkiewicz from Lemma~4 in \cite{BMNO}.

We illustrate the idea starting with Rademacher sums $X = \sum a_j\varepsilon_j$. Note that for a $C^1$ function $f\colon \R \to \R$ and $a, x \in \R$, we have
\begin{align*}
 \E[\varepsilon f(x+a\varepsilon)]
 &=\frac{f(x+a)-f(x-a)}2\\
 &=\frac12\int_{-1}^1\frac{\dd}{\dd u}f(x+au)\,\dd u
 =a\E[f'(x+aU)],
\end{align*}
where $U$ is a uniform random variable on $[-1,1]$. If we then take $U_1, U_2, \dots$ to be i.i.d. copies of $U$, independent of the Rademacher variables $\varepsilon_1, \varepsilon_2, \dots$, and set
\[ 
X^{(k)} = a_kU_k + \sum_{j \neq k} a_j\varepsilon_j,
 \]
then conditioning on the remaining signs, applying the above identity with $a=a_k$, multiplying by $a_k$ and summing over $k$ gives the Bara\'nski--Murawski--Nayar--Oleszkiewicz identity
\begin{equation}\label{eq:ole-trick}
\E\big[ Xf(X) \big] = \sum_{k=1}^n a_k^2\E\Big[f'(X^{(k)})\Big].
\end{equation}
Specialised to $f(x) = x|x|$, this yields 
\[ 
\E\big[|X|^3\big] = 2\sum_{k=1}^n a_k^2\E\Big[|X^{(k)}|\Big].
 \]
To obtain the desired bridge inequality \eqref{eq:Rad-bridge}, observe that for a convex function $\varphi\colon \R \to \R$, we have a simple uniform-to-Rademacher replacement bound
\begin{align*}
\E\big[\varphi(x + aU)\big] &\leq \E\Big[\varphi(x+a)\frac{1+U}{2} + \varphi(x-a)\frac{1-U}{2}\Big] \\
&= \frac{\varphi(x+a) + \varphi(x-a)}{2} = \E\big[\varphi(x+a\varepsilon)\big].
 \end{align*}
Thus, $\E|X^{(k)}| \leq \E|X|$ and \eqref{eq:Rad-bridge} follows.

There is a natural spherical counterpart of \eqref{eq:ole-trick}. Let $X=\sum_j a_j\xi_j$ be a nonzero spherical sum and discard zero coefficients. If $X^{(k)}$ is obtained by replacing $\xi_k$ by a random point $V_k$, uniform in the solid unit Euclidean ball $B_2^3$ and independent of all the sphere vectors, say
\[ 
X^{(k)} = a_kV_k + \underbrace{\sum_{j \neq k} a_j\xi_j}_{X'_k},
 \]
then the divergence theorem applied to a $C^2$ function $f\colon\R^3\to\R$ gives
\begin{equation}\label{eq:ole-trick-R3}
\E\scal{X}{\nabla f(X)} = \frac{1}{3}\sum_{k=1}^n a_k^2 \E\big[\Delta f(X^{(k)}) \big].
\end{equation}
Applying this first to $f_\delta(x) = \sqrt{|x|^2+\delta}$, $\delta > 0$ and then letting $\delta \to 0+$ yields
\[ 
\E|X| = \frac{2}{3}\sum_k a_k^2 \E\Big[|X^{(k)}|^{-1}\Big].
 \]
Taking the limit in expectation is justified on the left by monotone convergence: the integrand is $\scal{X}{\nabla f_\delta(X)} = \frac{|X|^2}{\sqrt{|X|^2+\delta}} \uparrow |X|$, and on the right by the pointwise bound 
\[
0 \leq \Delta f_\delta(x) = \frac{3}{\sqrt{|x|^2+\delta}} - \frac{|x|^2}{(|x|^2+\delta)^{3/2}} \leq \frac{3}{|x|},
\] 
combined with the integrability 
\[
\E|X^{(k)}|^{-1} \leq |a_k|^{-1}\E|V_k|^{-1} = \frac32 |a_k|^{-1},
\]
using \eqref{eq:sph-n=2} and the fact that $|V_k|$ has density $3r^2$ on $(0,1)$. Moreover, a replacement principle for the Newton potential $|x|^{-1}$ also follows from \eqref{eq:sph-n=2},
\begin{align*}
\E\Big[|X^{(k)}|^{-1}\Big] &= \E\min\{|a_k|^{-1}|V_k|^{-1}, |X'_k|^{-1}\} \\
&\geq  \E\min\{|a_k|^{-1}|\xi_k|^{-1}, |X'_k|^{-1}\} = \E\Big[|X|^{-1}\Big],
\end{align*}
since $|V_k| \leq 1 = |\xi_k|$, pointwise. Consequently, \eqref{eq:ole-trick-R3} also proves  the spherical bridge inequality \eqref{eq:Ball-bridge}, without the majorisation trick.

\subsection{Szarek made even simpler}

The distinction between the easy and hard cases in the proof of Szarek's inequality can be dispensed with by virtue of H\"older's inequality. Indeed,
\[ 
\E|X|^2 \leq \sqrt{\E|X|}\sqrt{\E|X|^3},
 \]
which, for a Rademacher sum $X$ with $\E|X|^2 = 1$, combined with the bridge inequality \eqref{eq:Rad-bridge}, $\E|X|^3\leq2\E|X|$, immediately yields
\[ 
1 \leq \sqrt{\E|X|}\sqrt{2\E|X|} = \sqrt{2}\E|X|,
 \]
that is, Szarek's inequality \eqref{eq:Szarek} (again, without invoking majorisation used in the \emph{hard} case).

This shortcut, however, does not give Ball's sharp constant: the spherical bridge followed by $\E|X|\leq(\E|X|^2)^{1/2}=1$ gives only $\E|X|^{-1}\leq3/2$. In the Rademacher case, the H\"older step is sharp at the extremiser because $|X|$ is supported on $\{0,\sqrt2\}$; the extremal spherical radius is not constant. The split into two coefficient regimes is therefore natural for Ball's problem, and it is precisely the Schur comparison of the lifted first moment in the small-coefficient regime (the \emph{hard} case) that allows us to use the bridge effectively. This is why we have retained the original presentation for Szarek's inequality and have chosen to show a common mechanism for the two inequalities, rather than concealing it behind a shortcut particular to Rademachers.

\subsection{A bit of history and wider context}

Moment comparison inequalities are most interesting as reversals of the plain monotonicity $\|X\|_p\leq\|X\|_q$, $0<p\leq q$, of the $L_p$-norms\footnote{As customary, we retain the terminology of a norm when $0<p<1$.} $\|X\|_p=(\E|X|^p)^{1/p}$. For Rademacher sums, such comparisons go back to Khinchin's century-old 1923 work \cite{Khinchin} on dyadic expansions. The conjecture that the sharp constant in \eqref{eq:Szarek} is $1/\sqrt2$, equivalently that we have $\|X\|_2 \leq \sqrt2\|X\|_1$ for every Rademacher sum $X=\sum_j a_j\varepsilon_j$, was posed by Littlewood in his 1930 paper \cite{Littlewood}, in relation to questions on boundedness of bilinear forms in infinitely many variables. Littlewood's conjecture was settled positively by Szarek in his master's thesis and published in 1976, \cite{Sz}. 

Szarek's argument is inductive and involves a delicate analysis of the coefficient configurations. It is organised around the homogeneous ratio
$F(a)=\frac{1}{|a|}\E|\sum_j a_j\varepsilon_j|.$
The proof exploits the piecewise-linear structure of the numerator: by \cite[Lemma~1]{Sz}, at a local minimum of $F$, the sign vectors producing exact cancellation must span $a^\perp$. The corollary to that lemma yields a quantitative starting estimate in six dimensions, while \cite[Lemma~2]{Sz} provides a local estimate near the two-coordinate extremiser using the lower-dimensional inequality. The induction then distinguishes vectors near this extremiser from those bounded away from it, rather than splitting according to the largest coefficient, and propagates a stronger bound in the latter region. A four-to-two coefficient reduction averages the first moment exactly but can decrease the average Euclidean norm. The infinite-product estimate of \cite[Lemma~3]{Sz} controls the accumulated loss and shows that the initial gap above $1/\sqrt2$ survives away from the extremiser in every dimension.

Tomaszewski \cite{Tomas87} subsequently gave a shorter proof based on an elementary but quite sophisticated combinatorial lemma tailored to the $L_1$--$L_2$ comparison (the lemma is proved by merging two coefficients, and the theorem
follows by summing over partitions of the coefficients and choices
of signs).

A completely different approach had been introduced by Haagerup in \cite{Haa} in his
work completing the determination of \emph{all} the sharp $L_p$--$L_2$ constants for Rademacher sums $X$. The main idea is analytic to its core, relying on Fourier representations of power functions $|x|^p$, thus allowing one to take advantage of the independence of the summands $a_j\varepsilon_j$ on the Fourier side by studying univariate integrals of their characteristic functions, however oscillatory in nature. Concretely, to give a flavour, for $X=\sum_j a_j\varepsilon_j$, we have the identity
\begin{equation}\label{eq:Haagerup-formula}
 \mathbb{E}|X|
 =\frac2\pi\int_0^\infty
       \frac{1-\prod_j\cos(a_jt)}{t^2}\,\dd t.
\end{equation}
Haagerup's next ingenious idea was to bound the product by the product of its absolute values and then \emph{decouple} it using the weighted AM--GM inequality (with weights related to the coefficients $a_j$), turning the problem into rather subtle estimates on certain special functions. Remarkably, this step is tight at the two-coordinate extremiser. Here decoupling means an elementary separation of a product; it should be distinguished from decoupling theory in harmonic analysis, with its far-reaching applications to number theory and geometry. For accounts of the latter, see \cite{Pierce, Guth}.

Lata\l a and Oleszkiewicz in their early work \cite{LO} found another approach to Szarek's inequality. Their striking proof (nota bene, very short) in modern language is based on the discrete Fourier analysis on the Boolean cube $\{-1,1\}^n$ (Walsh functions), and does much more than establish the scalar inequality \eqref{eq:Szarek}. See, e.g. \cite{NT-surv} for a streamlined presentation. For arbitrary vectors $x_1, \dots, x_n$ in any normed space, they show 
\[ 
\left(\E\left\| \textstyle\sum_j \varepsilon_jx_j \right\|^2\right)^{1/2} \leq \sqrt2 \cdot \E\left\| \sum_j \varepsilon_jx_j \right\|.
 \]
No Hilbert-space structure or geometric assumption on the norm is required (in fact \eqref{eq:Szarek} extends \emph{quasi gratis} to the Hilbert-space setting, see the so-called Szarek-Orlicz argument, Remark 3 in \cite{Sz}). We emphasise that the second moment above is not replaced by $\sum_j \|x_j\|^2$, which would be a very different assertion outside Hilbert space. This is the sharp $L_1-L_2$ case of the vector-valued moment comparisons initiated by Kahane in \cite{Kahane}, see also \cite{LT} for their role in probability in Banach spaces. The essence of the Lata\l a-Oleszkiewicz argument, in the later operator formulation \cite{KLO, Olesz}, is a gain in the spectral gap for the discrete Laplacian coming from symmetry, combined with an effective use of convexity of the norm (a pointwise bound), resulting in a Poincar\'e-type inequality. This exceptionally economical and powerful mechanism is one of the main motivations behind this paper, where we sought an equally transparent approach to Ball's inequality in the framework of spherical harmonics. The challenge was to connect this spectral framework with the negative moment in Ball's problem. Although this did not yield a spectral proof, it led to the moment-lifting argument underpinning this work.

By now, there is a substantial literature on Khinchin-type inequalities for both Rademacher sums \cite{Haa, Jakim, KLO, LO}, as well as other distributions: uniform variables on intervals \cite{LO-uniform, CGT} and finite sets \cite{HT}, Steinhaus variables \cite{Sawa, Konig-Steinhaus, RTW} and higher-dimensional spherical analogues \cite{BC, KK, JTT}, Gaussian mixtures \cite{ENT-mixtures}, marginals of $\ell_p$-balls \cite{ENT-moments}, type-L variables \cite{Newman, HNT}, or exponentials \cite{BTT}. These comparisons have strong connections to sharp bounds for sections and projections \cite{BGMN, BN, CKT, KRZ, MP, NT-surv}. Comparisons between moments other than the second continue to stimulate new approaches; the treatment of even moments via ultra log-concavity \cite{NO}, and Hadamard factorisation \cite{HNT}, souped-up to more precise Schur-monotonicity results \cite{Chavez}, as well as the recent $L_p$--$L_4$ theorem of \cite{BMNO}, provide examples.

Ball's proof is a landmark in the Fourier-analytic treatment of section problems, and to the best of our knowledge is the only available one prior to this paper. Akin to \eqref{eq:Haagerup-formula}, the Fourier-inversion formula applied to the density of the relevant sum of uniform random variables gives
\begin{equation}\label{eq:Ball-formula}
\vol_{n-1}([-\tfrac12,\tfrac12]^n \cap a^\perp) = \frac2\pi\int_0^\infty\prod_j\frac{\sin(a_jt)}{a_jt}\ \dd t,
\end{equation}
for every unit vector $a$ in $\R^n$.

Then Ball's argument proceeds along very similar lines to Haagerup's, with a decoupling step taken care of by means of H\"older's inequality, which reduces the problem in the small-coefficient regime (the \emph{hard} case) to showing the following integral inequality:
\[
\frac1\pi\int_{\R}\left|\frac{\sin t}{t}\right|^s\ \dd t 
\leq
\frac1\pi\int_{\R}\left(e^{-t^2/(2\pi)}\right)^s\ \dd t =\sqrt{\frac2s}, \qquad s\geq2.
\]
This is a rather delicate estimate due to the oscillatory nature of the integrand. It can be viewed as the key analytic component and workhorse of the whole approach. Our approach here bypasses that entirely. Ball's direct proof of the integral inequality above relied to a large extent on several carefully crafted pointwise estimates coming from power series expansions and comparisons with the Gaussian function. Later, Nazarov and Podkorytov in \cite{NP} offered an alternative, robust and efficient method based on controlling the number of intersections between the distribution functions of $|\frac{\sin t}{t}|$ and $e^{-t^2/(2\pi)}$. More recently, Melbourne and Roberto in \cite{MRtransport} put forward yet another point of view and developed an efficient mechanism in terms of controlling the derivative of relevant transport maps. 
The Fourier representation \eqref{eq:Ball-formula} goes back to P\'olya \cite{Polya}; in the context of cube slicing, it can be traced back to Hensley's work~\cite{Hensley}, where it was used as a proxy in his pioneering probabilistic viewpoint. Fourier techniques subsequently grew into a substantial part of convex geometry, developed systematically in Koldobsky's work \cite{Kold-sections} and his monograph \cite{Kold-book}. A prominent example is the resolution of the Busemann-Petty problem (Problem 1 in \cite{BP}), culminating in \cite{GKS}.

We finish by reiterating that Khinchin-type inequalities provide a natural probabilistic language for both section and projection questions, well beyond the present two problems. This paradigm is central in the recent survey \cite{NT-surv}. Here, concretely, for a unit vector $a$ in $\R^n$,
\begin{align}
 \frac{\vol_{n-1}(B_\infty^n \cap a^\perp)}{\vol_{n-1}(B_\infty^{n-1})}
 &=\E\left|\sum_j a_j\xi_j\right|^{-1},
     \label{eq:remarks-section-formula}\\
 \frac{\vol_{n-1}(P_{a^\perp}B_1^n)}{\vol_{n-1}(B_1^{n-1})}
 &=
      \E\left|\sum_j a_j\varepsilon_j\right|,
    \label{eq:remarks-projection-formula}
\end{align}
and related representations have been established for normalised sections of $\ell_p$-balls $B_p^n=\{x\in\R^n, \ \|x\|_p\leq1\}$ for $0<p\leq\infty$, and for their hyperplane projections in the convex range $1\leq p\leq\infty$; see \cite{BGMN, BN, NT-surv}. The first identity can be traced back to \cite{KK}. The second, recorded in \cite{Ball-wave}, follows by projecting the facets of the cross-polytope $B_1^n$; for the corresponding use of Cauchy's projection formula for other $\ell_p$ balls, see \cite{BN}. It is still intriguing that the negative moment of a spherical sum describes sections, while the positive moment of a Rademacher sum describes projections, for the polar pair $(B_\infty^n)^\circ=B_1^n$.

\subsection{Future work}\label{sec:future}

\paragraph{Stability.}

Once a sharp inequality and its equality cases are known, the next natural question is whether near equality forces the coefficients to be close to an extremiser. For Szarek's inequality, see \cite{DDS}; for Ball's inequality, see \cite{CNT, MRstability}, with explicit estimates for both in \cite{ENT}. For higher moments, dimensions and other aspects, see also \cite{Chavez, ENT-distribution, Jakim, Jakim2, JTT} The analogous question for polydisc slicing is treated in \cite{GTW}. We plan to retain the losses in our lifting and majorisation arguments to give direct, more transparent proofs of such stability estimates in the companion paper \cite{TT}, with better numerical constants.

\paragraph{Higher dimensions.}

The differentiation principles \eqref{eq:Rad-diff} and \eqref{eq:sph-diff} extend \emph{mutatis mutandis} to all dimensions: for $\xi$ uniform on the unit sphere $S^{d-1}$,
\[
 \left.\frac{\dd}{\dd t}\right|_{t=0+}
 \E f(x+\sqrt t\,\xi)=\frac1{2d}\Delta f(x).
\]
Here $f$ is a $C^2$ function near $x$.
Together with the spherical majorisation principle of \cite{BC}, we shall provide a common treatment of the known sharp slicing inequalities for unit balls in $\ell_\infty(\ell_2)$ by block subspaces, in particular bypassing the difficult Fourier estimates in the works of Oleszkiewicz and Pe\l czy\'nski \cite{OP} and Brzezinski \cite{Brz}. This method also gives sharp $L_p$--$L_2$ Khinchin inequalities for sums of independent random vectors uniform on $S^{d-1}$, for every $d\geq5$ and $-(d-2)\leq p\leq-2$. We will present these results in \cite{TT}, extending Theorem~1 from \cite{CST} into part of the range mentioned as open in Table 1 therein.
Further aspects, including moment comparisons and quantitative questions in this setting, have been also studied in \cite{CST, Eskenazis-block, JTT}.  Moreover, very recently, stability of Brzezinski's bound has been established in \cite{BRT}, albeit without explicit constants. Our companion paper will rectify that.

\paragraph{Other bodies.}

A different stability question concerns perturbations of the summand distribution; see \cite{ENT-distribution}. In particular, the resilience theorem of \cite{ENT} says that the two-coordinate normal continues to maximise sections of real $\ell_p$ balls for all sufficiently large $p$, uniformly in the dimension, and to minimise projections of $\ell_q$ balls for $q$ sufficiently close to $1$. It would be interesting to find a lifting adapted to these perturbed laws which remains sharp at the two-coordinate sum, and thereby obtain a simpler proof of resilience.

The simplex gives another natural test. Webb's upper bound \cite{Webb} already has a proof based on comparisons of half-line moments of log-concave densities; see \cite[Lemma~1.7]{Webb-thesis}. This fits the bridge philosophy, although it is not the same replacement identity. The negative-moment point of view is developed further in \cite{MRTT}. For the opposite, lower bound, Ambrus and G\'argy\'an \cite{AG} recently proved that the minimal central hyperplane sections are parallel to facets, following the asymptotically sharp result of Tang \cite{Tang-simplex}. We would like to understand whether lifting and convexity can give an alternative proof of this lower bound.

\paragraph{Acknowledgments.} The compelling idea of using majorisation has been floating around for a long time. We are particularly grateful to Peter Pivovarov and Paul Simanjuntak for a brief but very inspiring discussion back in 2020 at the University of Missouri in Columbia (as well as the lovely visit there), and to Piotr Nayar for the many discussions.

\paragraph{AI disclosure.} 
During preliminary work, T.T. explored a spectral proof of Ball’s cube slicing inequality in discussions with ChatGPT, which contributed to the moment-lifting approach and the initial arguments. The authors subsequently rederived, simplified, and developed the proofs and wrote the final exposition.

\end{document}